\renewcommand*{\backref}[1]{}
\renewcommand*{\backrefalt}[4]{
  \ifcase #1 
  [No citations.]
  \or [#2]
  \else [#2]
  \fi }
\newtheorem{theorem}{Theorem}
\newtheorem{lemma}[theorem]{Lemma}
\newtheorem{corollary}[theorem]{Corollary}
\theoremstyle{definition}
\newtheorem{definition}[theorem]{Definition}
\newtheorem{remark}[theorem]{Remark}
\newtheorem{question}[theorem]{Question}
\newtheorem{conjecture}[theorem]{Conjecture}
\newcommand{\st}{\mathbin{\mid}} 
\newcommand{\from}{\colon}
\newcommand{\semi}{\rtimes}
\newcommand{\isom}{\cong} 
\newcommand{\cross}{\times}
\newcommand{\CC}{\mathbb{C}}
\newcommand{\EE}{\mathbb{E}}
\newcommand{\HH}{\mathbb{H}}
\newcommand{\NN}{\mathbb{N}}
\newcommand{\RR}{\mathbb{R}}
\newcommand{\ZZ}{\mathbb{Z}}
\newcommand{\WP}{\operatorname{WP}}
\newcommand{\BS}{\operatorname{BS}}
\newcommand{\MCF}{\operatorname{MCF}}
\newcommand{\PSL}{\operatorname{PSL}}
\newcommand{\Sol}{\operatorname{Sol}}
\newcommand{\Nil}{\operatorname{Nil}}
\newcommand{\group}[2]{\langle #1 \st #2 \rangle}
\newcommand{\subgp}[1]{{\langle #1 \rangle}}
\newcommand{\refsec}[1]{Section~\ref{Sec:#1}}
\newcommand{\refthm}[1]{Theorem~\ref{Thm:#1}}
\newcommand{\reflem}[1]{Lemma~\ref{Lem:#1}}
\newcommand{\refrem}[1]{Remark~\ref{Rem:#1}}
\newcommand{\fakeenv}{} 
\newenvironment{restate}[2]  
{ 
 \renewcommand{\fakeenv}{#2} 
 \theoremstyle{plain} 
 \newtheorem*{\fakeenv}{#1~\ref{#2}} 
 \begin{\fakeenv}
}
{
 \end{\fakeenv}
}
\title{Groups whose word problems are not semilinear}
\author[Gilman]{Robert H. Gilman}
\email{rgilman@stevens.edu}
\author[Kropholler]{Robert P. Kropholler}
\email{robert.kropholler@gmail.com}
\author[Schleimer]{Saul Schleimer}
\email{s.schleimer@warwick.ac.uk}
\thanks{This material is based upon work supported by the National
  Science Foundation grant DMS-1440140 while the authors were in
  residence at the Mathematical Science Research Institute (MSRI) in
  Berkeley, California, during the Fall 2016 Semester.}
\date{\today}
\begin{document}

\begin{abstract} 
Suppose that $G$ is a fintely generated group and $\WP(G)$ is the formal language of words defining the identity in $G$.
We prove that if $G$ is a nilpotent group, the fundamental group of a
finite volume hyperbolic three-manifold, or a right-angled Artin group whose graph lies in a certain infinite class, 
then $\WP(G)$ is not a multiple context free language.
\end{abstract}

\maketitle

\section{Introduction}

The word problem for a finitely generated group $G$ is to decide if a
given word in the generators and their formal inverses, defines the
identity in $G$ or not.  This problem was proposed for finitely
presented groups by M.~Dehn~\cite{MR1511645} in 1911 and has been
profitably studied since then.  In 1971
A.~V.~Anisimov~\cite{MR0301981} introduced the word problem as a
formal language. The validity of this point of view was
confirmed by Muller and Schupp's result~\cite{MR710250} that the word
problem of $G$ is a context-free language if and only if $G$ is
virtually free.

Muller and Schupp's result inspired many authors. See for
example~\cite{MR3200359, MR3394671, MR3048114, MR2142503, MR2470538,
  MR2787458, MR2323454, MR1914990, MR3338967}. One intriguing aspect
of their work is the connection it reveals between the logical
complexity of the word problem, considered as a formal language, and
geometric properties of the Cayley diagram.  Context-free languages are generated by context-free grammars and are accepted by pushdown
automata.  For word problems of groups these two conditions correspond directly to the geometric properties:
\begin{enumerate}
\item cycles in the Cayley diagram are triangulable by diagonals of
  uniformly bounded length, and
\item the Cayley diagram has finitely many end isomorphism types, 
\end{enumerate} 
respectively. 

A natural question is whether there is a group whose word problem is
not context free, but is in the larger class of indexed languages. In
particular, is the word problem of $\ZZ^2$ indexed? These
questions have been open for decades.  Indexed languages form level
two of the OI hierarchy of language classes, and
S.~Salvati~\cite{MR3354791} has recently shown that the word problem
of $\ZZ^2$ is a multiple context-free ($\MCF$) language and hence at
level three of that hierarchy.  In addition, as with Muller and Schupp's result,
Salvati's linguistic characterization of the word problem of $\ZZ^2$
is closely related the geometry of its Cayley diagram.

It is of interest, then, to investigate which other groups have $\MCF$ word problem and what geometric conditions their Cayley diagrams might satisfy. In this paper we use the facts that $\MCF$ languages  form a cone~\cite{MR1131066} and are semilinear~\cite{Vijay-Shanker} to show that a large swath of groups do not have $\MCF$ word problem. More precisely we prove the following theorems.

\begin{restate}{Theorem}{Thm:Nilpotent}
Let $\CC$ be a cone of semilinear languages. If the word problem of a
finitely generated virtually nilpotent group $G$ is in $\CC$, then $G$ is virtually abelian.
\end{restate}

Meng-Che Ho~\cite{turbo} has recently shown that the word problem of $\ZZ^n$
is $\MCF$ for all $n$. Hence by \reflem{Subgroups} all finitely
generated virtually abelian groups have $\MCF$ word problems. We have the following corollary to \refthm{Nilpotent}.

\begin{corollary}
A finitely generated virtually nilpotent group has $\MCF$ word problem if and only if it is virtually abelian.
\end{corollary}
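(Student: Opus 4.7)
The plan is to prove the two directions of the equivalence separately, each by direct appeal to results already in hand.

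For the forward direction, suppose $G$ is a finitely generated virtually nilpotent group whose word problem lies in $\MCF$. I would invoke \refthm{Nilpotent} with the cone $\mathcal{C} = \MCF$. This requires only the two facts about $\MCF$ languages recorded in the introduction: they form a cone (cited to MR1131066) and they are semilinear (cited to Vijay-Shanker). Both hypotheses of \refthm{Nilpotent} are then met, so the theorem delivers the conclusion that $G$ is virtually abelian.

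For the reverse direction, suppose $G$ is finitely generated virtually abelian. Then $G$ contains a finite-index subgroup isomorphic to $\ZZ^n$ for some $n \geq 0$. By Meng-Che Ho's theorem cited immediately before the corollary, $\WP(\ZZ^n)$ is $\MCF$. Now apply \reflem{Subgroups}, which transfers the $\MCF$ property from a finite-index subgroup up to the whole group, to conclude that $\WP(G)$ is $\MCF$.

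Neither direction presents a genuine obstacle in its own right; the corollary is essentially a packaging of \refthm{Nilpotent}, Ho's theorem, and \reflem{Subgroups}. The substantive mathematics lives in the proof of \refthm{Nilpotent} itself; the only bookkeeping required here is to verify that $\MCF$ really is a cone of semilinear languages, which is exactly what the citations in the introduction establish, and that the hypotheses of \reflem{Subgroups} are symmetric enough to pass $\MCF$-ness from the $\ZZ^n$ subgroup up to the overgroup. Once those two checks are dispatched, both implications are one-line consequences.
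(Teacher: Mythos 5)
Your proof is correct and matches the paper's (implicit) argument exactly: the forward direction is \refthm{Nilpotent} applied with $\mathcal{C} = \MCF$ (using that $\MCF$ languages form a semilinear cone), and the reverse direction combines Ho's result that $\WP(\ZZ^n)$ is $\MCF$ with part (3) of \reflem{Subgroups}. Nothing further is needed.
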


Our next theorem concerns fundamental groups of three-manifolds.

\begin{restate}{Theorem}{Thm:Hyperbolic}
Suppose that $M$ is a hyperbolic three-manifold.  Then $\WP(\pi_1(M))$ is not $\MCF$.
\end{restate}

Let $\mathcal{G}$ be the class of graphs containing a point and closed under the following operations:
\begin{itemize}
\item If $\Gamma,\Gamma'\in\mathcal{G}$, then $\Gamma\sqcup\Gamma'\in \mathcal{G}$, and
\item if $\Gamma\in \mathcal{G}$, then $\Gamma\ast\{v\}\in\mathcal{G}$
\end{itemize}
where $\sqcup$ denotes disjoint union and $\Gamma\ast\{v\}$ is the cone of $\Gamma$. It will be clear from the context whether we are
speaking of the cone of a graph or a cone of languages.

\begin{restate}{Theorem}{Thm:RAAG}
Let $\Gamma$ be a graph and $A(\Gamma)$ be the associated RAAG. If $A(\Gamma)$ has multiple context-free word problem, then $\Gamma\in \mathcal{G}$\end{restate}

These theorems are proved in
Sections~\ref{Sec:Nilpotent},~\ref{Sec:Hyperbolic} and~\ref{Sec:RAAG}
respectively. \refsec{bkgnd} contains relevant background material
including definitions of cones and semilinearity. For further
introduction to formal language theory see~\cite{MR0443446, MR526397,
  MR645539, MR1469992}. An introduction aimed at group theorists is
given in~\cite{MR1364178}. For properties of multiple context-free
languages consult~\cite{MR1131066} and~\cite{Kallmeyer2010}.

\section{Background}
\label{Sec:bkgnd}

\subsection{Formal languages}
Let $\Sigma$ be a finite \emph{alphabet}: that is, a nonempty finite set.  A
\emph{formal language} over $\Sigma$ is a subset of $\Sigma^*$, the
free monoid over $\Sigma$.  Elements of $\Sigma^*$ are called
\emph{words}.

A \emph{choice of generators} for a group $G$ is a surjective monoid
homomorphism $\pi \from \Sigma^* \to G$. We require that $\Sigma$ be
\emph{symmetric}: closed under a fixed-point-free involution
$\cdot^{-1}$.  We also require $\pi(a^{-1}) = \pi(a)^{-1}$ for all $a
\in \Sigma$.  The involution extends to all words over $\Sigma$ in the
usual way. 
Note that we adhere to the usual notation for group presentations. The choice of generators corresponding to a presentation 
$\langle a, t \mid tat^{-1}a^{−2}\rangle$ uses the alphabet $\Sigma=\{ a, a^{-1}, t, t^{-1}\}$ etc.

The \emph{word problem} for $G$ is the formal language $\WP(G) =
\pi^{-1}(1)$.  It is evident that $\WP(G)$ depends on the choice of
generators, but this dependence is mild.  As we will see below, whether
or not $\WP(G)$ is in any particular cone of formal languages is
independent of the choice of generators and depends only on $G$.

\subsection{Regular languages and finite automata}
A \emph{finite automaton} over $\Sigma$ is a finite directed graph with edges labelled
by words in $\Sigma^*$, a designated start vertex and a set of
designated accepting vertices. A word is \emph{accepted} by an automaton if
it is the concatenation of labels along a directed path from the start
vertex to an accepting vertex. The \emph{accepted language} is the set of all accepted words. The \emph{regular languages} over a finite alphabet $\Sigma$ are the
languages accepted by finite automata over $\Sigma$.

\begin{figure}[ht]
\begin{tikzpicture}[->,>=stealth',shorten >=1pt,auto,node distance=2.0cm, semithick]
  \node[state]            (A)                   {$q_a$};
  \node[state]            (B) [right of=A]      {$q_b$};
  \node[state]            (D) [above left of=B] {$q_d$};
  \node[accepting,state]  (C) [right of=B]      {$q_c$};
  \node[node distance= 1cm] (S) [left of=A]       {};
  \path (A) edge              node {$b$} (B)
            edge              node {$b$} (D)
        (B) edge [loop above] node {$c$} (B)
            edge              node {$b$} (C)
        (D) edge              node {$a$} (B)
        (S) edge                         (A);
\end{tikzpicture}
\caption{A finite automaton accepting the language $bc^*b+bac^*b$ \label{fig:automaton}} 
\end{figure}
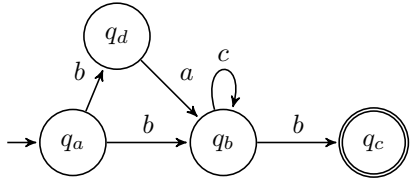

Figure~\ref{fig:automaton} shows a finite automaton with start vertex
$q_a$ and one accepting vertex, $q_c$. The regular language accepted
by this automaton may be denoted symbolically via the regular
expression $bc^*b+bac^*b$. Here $+$ stands for union and $^*$ for
submonoid closure.

\subsection{Transducers}
A \emph{transducer} $\tau$ (more precisely a rational transducer) is a finite automaton whose edge labels are pairs of words $(w,v)$ over finite alphabets $\Sigma, \Delta$ respectively. Path labels are obtained by concatenating the edge labels in each coordinate. The labels of all accepted p.aths form a subset of $\Sigma^*\times \Delta^*$. The image under $\tau$ of a language $L\subset\Sigma^*$ is $\tau(L) = \{ v \mid \mbox{there is some $w\in L$ with $(w,v)\in \tau$}\}$. 

\subsection{Cones}
A class, $\CC$, of languages is a \emph{cone} (also called a full trio~\cite[pages 201-202]{MR1469992}) if it
contains at least one nonempty language and is closed under the
following operations.
\begin{enumerate}
\item If $L\subset\Sigma_1^*$ is in $\CC$, and $\sigma \from
  \Sigma_1^*\to \Sigma_2^*$ is a monoid homomorphism, then $\sigma(L)$
  is in $\CC$.
\item If $L\subset\Sigma_2^*$ is in $\CC$, and $\sigma \from
  \Sigma_1^*\to \Sigma_2^*$ is a monoid homomorphism, then
  $\sigma^{-1}(L)$ is in $\CC$.
\item If $L\subset\Sigma_1^*$ is in $\CC$, and $R\subset
  \Sigma_1^*$ is regular, then $L \cap R$ is in $\CC$.
\end{enumerate}
In other words cones are closed under homomorphism, inverse
homomorphism and intersection with regular languages. The condition
on nonempty languages above is included to rule out the empty cone and
the cone consisting of the empty language.  Multiple context-free
languages form a cone~\cite{MR1131066}.

\begin{theorem}[Nivat's Theorem~\cite{MR0238633}]\label{Th:Nivat} If $L$ is in a cone $\tau$, then so is $\tau(L)$. In other words, cones are closed under transduction. 
\end{theorem}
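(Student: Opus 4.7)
The plan is to realise an arbitrary rational transduction as a composition of the three operations defining a cone, so that closure under those operations forces closure under transduction. Let $\CC$ be a cone, let $L \subseteq \Sigma^*$ belong to $\CC$, and let $\tau$ be a transducer with input alphabet $\Sigma$ and output alphabet $\Delta$.

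First I would form the disjoint union $\Sigma' = \Sigma \sqcup \Delta$, relabelling letters if necessary so that input and output symbols are distinguishable, and define two projection homomorphisms $\pi_1 \from (\Sigma')^* \to \Sigma^*$ and $\pi_2 \from (\Sigma')^* \to \Delta^*$ that erase all letters not in the relevant subalphabet. Next I would write down a regular language $R \subseteq (\Sigma')^*$ encoding accepting runs of $\tau$: take the finite graph underlying $\tau$, replace every edge labelled $(u,v) \in \Sigma^* \times \Delta^*$ by a path through fresh auxiliary vertices spelling the concatenation $uv$, retain the same start and accepting states, and let $R$ be the language accepted by the resulting finite automaton. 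A word $w \in R$ then factors as $w = u_1 v_1 \cdots u_n v_n$ corresponding to a successful path in $\tau$ that reads input $u_1 \cdots u_n$ and writes output $v_1 \cdots v_n$.

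The crux is then the identity $\tau(L) = \pi_2\bigl(R \cap \pi_1^{-1}(L)\bigr)$. Granting this, cone closure under inverse homomorphism puts $\pi_1^{-1}(L)$ in $\CC$; closure under intersection with regular languages puts $R \cap \pi_1^{-1}(L)$ in $\CC$; and closure under homomorphism puts $\tau(L) = \pi_2\bigl(R \cap \pi_1^{-1}(L)\bigr)$ in $\CC$, as required.

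I do not expect a serious obstacle. That $R$ is regular is immediate from the construction, and the set-theoretic identity is a routine unwinding of definitions: an element of the right-hand side is precisely the encoding of an accepting path of $\tau$ whose input projection lies in $L$, and $\pi_2$ extracts exactly the corresponding output. The only bookkeeping point is ensuring that the projections pick out inputs and outputs unambiguously when $\Sigma$ and $\Delta$ originally overlap, which the tagged disjoint-union convention handles.
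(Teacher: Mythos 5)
Your argument is correct, and it is the classical proof of Nivat's theorem: every rational transduction factors as an inverse homomorphism, followed by intersection with a regular language, followed by a homomorphism, via the identity $\tau(L) = \pi_2\bigl(R \cap \pi_1^{-1}(L)\bigr)$, and each of these three operations is a cone closure property. The paper gives no proof of this statement at all --- it simply cites Nivat --- so there is nothing to diverge from; your write-up supplies exactly the standard argument the citation stands in for, and the one bookkeeping point you flag (tagging $\Sigma$ and $\Delta$ apart so that the two projections are unambiguous) is indeed the only one.
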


As the following results are well known, we provide only sketches of the proofs.

\begin{lemma}
\label{Lem:Subgroups}
Let $\WP(G)$ be the word problem of $G$ with respect to a
choice of generators $\pi:\Sigma^*\to G$. Suppose $\WP(G)$ is in a cone of $\CC$ of formal languages.  Then:
\begin{enumerate}
\item The word problem for $G$ with respect to any choice of
  generators is in $\CC$.
\item The word problem for every finitely generated subgroup of $G$ is
  in $\CC$.
\item The word problem for every finite index supergroup of $G$ is
  in $\CC$.
\end{enumerate}
\end{lemma}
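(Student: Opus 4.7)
My plan is to handle the three parts in order, with each successive part using more of the cone machinery.

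For part (1), suppose $\pi_1 \from \Sigma_1^* \to G$ and $\pi_2 \from \Sigma_2^* \to G$ are two choices of generators, and $\WP_1(G) \in \CC$. For each $a \in \Sigma_2$, pick a word $w_a \in \Sigma_1^*$ with $\pi_1(w_a) = \pi_2(a)$, chosen so that $w_{a^{-1}}$ is the formal inverse of $w_a$. Extending to a monoid homomorphism $\sigma \from \Sigma_2^* \to \Sigma_1^*$, one checks immediately that $\pi_2 = \pi_1 \circ \sigma$, so $\WP_2(G) = \sigma^{-1}(\WP_1(G))$ and closure under inverse homomorphism finishes the argument.

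For part (2), let $H \le G$ be finitely generated by elements $h_1,\ldots,h_k$, with symmetric alphabet $\Delta = \{h_1^{\pm 1},\ldots,h_k^{\pm 1}\}$. For each $h_i$, pick $u_i \in \Sigma^*$ with $\pi(u_i) = h_i$ and set $\sigma \from \Delta^* \to \Sigma^*$ to be the homomorphism with $\sigma(h_i^{\pm 1}) = u_i^{\pm 1}$. Then a word over $\Delta$ represents the identity in $H$ iff its image under $\sigma$ represents the identity in $G$, so $\WP(H) = \sigma^{-1}(\WP(G)) \in \CC$ by the inverse homomorphism axiom.

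For part (3), let $K$ contain $G$ with finite index, and fix a right transversal $T = \{t_1,\ldots,t_n\}$ with $t_1 = 1$. Given a generating set $\Sigma_K$ for $K$ and a generating set $\Sigma_G$ for $G$, define for each pair $(t_i, a) \in T \times \Sigma_K$ the element $\gamma(t_i, a) = t_i a t_j^{-1} \in G$, where $t_j$ is the representative of the coset $t_i a G$, and pick a word $\hat\gamma(t_i,a) \in \Sigma_G^*$ representing it. Build a transducer $\tau$ whose state set is $T$, whose start and sole accept state is $t_1 = 1$, and which has a transition from $t_i$ to $t_j$ reading $a$ and writing $\hat\gamma(t_i,a)$. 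Tracking cosets along the word shows that $(w,v) \in \tau$ iff $w$ evaluates into $G$ and $v$ represents the same element of $G$; consequently $\WP(K) = \tau^{-1}(\WP(G))$. Since the inverse of a rational transducer is again a rational transducer, Nivat's theorem (\refthm{Nivat}) gives $\WP(K) \in \CC$.

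The main obstacle is the bookkeeping in part (3): one has to set up the coset-tracking transducer carefully and verify the computation $t_i a_1 a_2 \cdots a_\ell = \hat\gamma(t_i,a_1)\hat\gamma(t_{j_1},a_2)\cdots\hat\gamma(t_{j_{\ell-1}},a_\ell)\, t_{j_\ell}$ holds as stated so that accepting paths correspond exactly to words of $K$ that evaluate to the identity. Parts (1) and (2) are essentially immediate once the right homomorphism is written down.
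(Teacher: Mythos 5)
Your proposal is correct and follows essentially the same route as the paper: parts (1) and (2) via factoring through the free monoid and closure under inverse homomorphism, and part (3) via a coset (Schreier) transducer together with Nivat's theorem. The only cosmetic difference is that you run the transducer from $\Sigma_K^*$ to $\Sigma_G^*$ and invert, while the paper builds the transduction in the other direction so that $\WP(K)=\tau(\WP(G))$ directly; these are equivalent since rational transductions are closed under inversion.
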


\begin{proof}
Suppose $\delta:\Delta^*\to G$ is any choice of generators for $G$ or one of its finitely generated subgroups. Since $\Delta^*$ is a free monoid, $\delta$ factors as $\pi\circ f$ for some monoid homomorphism $f:\Delta^*\to\Sigma^*$. It follows that $\delta^{-1}(1) = f^{-1}(\WP(G))\in \CC$.

Now suppose $G$ has finite index in a group $K$, and $\delta:\Delta^*\to K$ is a choice of generators. Since we are assuming that $\Delta$ is symmetric, we can partition it into a disjoint union $\Delta=\Delta_0\sqcup\Sigma_0^{-1}$. By Theorem~\ref{Th:Nivat} it suffices to show that $\WP(K)$ is the image of $\WP(G)$ under a transduction $\tau$. We define $\tau$ in three steps. 

First, recall that the vertices of the Schreier diagram, $\Gamma$, of $G$ in $H$ are the right cosets $\{Gx\}$ of $G$ in $H$, and that for each vertex $Gx$ and generator $a\in \Delta_0$ there is a directed edge labelled $a$ from $Gx$ to $Gxa$. Paths in $\Gamma$ may traverse edges in either direction, but an edge traversed aginst its orientation contributes the inverse of its label to label of a path. Fixing $G$ as the start vertex and sole accepting vertex makes $\Gamma$ into a finite automaton which accepts the regular language of all words over $\Delta$ which represent elements of $G$. 

Second, pick a spanning tree $\Gamma_0$ for $\Gamma$ with root $G$ and edges oriented in any direction. Each edge $e$ in $\Gamma - \Gamma_0$ determines a Schreier generator $uav^{-1}$ for $G$. Here $u$ is the label of the path in $\Gamma_0$ from $G$ to the source vertex of $e$, $v$ is the label of the path to the target vertex, and $a$ is the label of $e$.

Third make $\Gamma$ into a transducer by changing its labels into pairs of words. Existing edge labels become the second components of new edge labels. Each edge in the spanning tree has the empty word as the first component of its label, while ach edge $e$ not in the spanning tree has a new letter $b_e$ as the first component of its label.

Let $\Sigma$ be the alphabet of all the $b_e$'s and their formal inverses. The transducer $\Gamma$ defines a binary relation $\tau:\Sigma^*\to \Delta^*$. Define a monoid homomorphism $\pi:\Sigma^*\to G$ which sends each $b_e$ to the image under $\delta$ of its corresponding Schreier generator, and likewise for $b_e^{-1}$. It is straightforward to check first that $\pi(u)=\delta(v)$ for any $(u,v)\in \tau$ and second that $\tau(\WP(G)) = \WP(H)$.
\end{proof}

\subsection{Semilinearity}

For each $a_i\in \Sigma = \{a_1, \ldots, a_k\}$ and $w \in \Sigma^*$,
define $|w|_i$ to be the number of occurrences of $a_i$ in $w$.  The
\emph{Parikh map} $\psi \from \Sigma^* \to \NN^k$ sends $w$ to the vector
$(|w|_1,\ldots, |w|_k)$ where $\NN$ is the non-negative natural numbers.

A \emph{linear subset} of $\NN^k$ is one of the form $v_0 + \langle
v_1,\ldots v_m \rangle$, i.e, a translate of a finitely generated
submonoid. A \emph{semilinear subset} of $\NN^k$ is a finite union of linear subsets. A \emph{semilinear language} $L\subset \Sigma^*$ is is a language whose image under the map  $\psi \from \Sigma^* \to \NN^k$ defined above is semilinear. Multiple context-free languages are semilinear by~\cite{Vijay-Shanker}.

Since semilinearity is preserved by monoid homomorphisms $\NN^k\to \NN^m$, our discussion yields the following useful result.

\begin{lemma}
\label{Lem:Useful}
Suppose that $L\subset \Sigma^*$ is semilinear, and $R \subset
\Sigma^*$ is regular. Then the projection of $\psi(W \cap R)$ onto any nonempty subset of coordinates is semilinear. \qed   
\end{lemma}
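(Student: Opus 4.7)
The plan is to combine two standard closure properties: cones of semilinear languages are closed under intersection with regular languages, and semilinear subsets of $\NN^k$ are closed under monoid homomorphisms to $\NN^m$ (the latter being the fact flagged in the sentence immediately preceding the lemma).

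First I would argue that $L \cap R$ is itself semilinear. Although the statement is phrased for an arbitrary semilinear $L$, the lemma is to be applied in the context of a cone $\CC$ of semilinear languages containing $L$; condition~(3) in the definition of a cone then forces $L \cap R \in \CC$, so $\psi(L \cap R) \subseteq \NN^k$ is semilinear. Write this Parikh image as a finite union
$\bigcup_{i=1}^{n}\bigl(v_{i,0} + \langle v_{i,1}, \ldots, v_{i,k_i}\rangle\bigr)$
of translates of finitely generated submonoids of $\NN^k$.

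Next, let $\pi \from \NN^k \to \NN^m$ be projection onto the prescribed nonempty subset of coordinates; this is a monoid homomorphism. Applying $\pi$ term by term yields
\[
\pi\bigl(\psi(L \cap R)\bigr) = \bigcup_{i=1}^{n}\bigl(\pi(v_{i,0}) + \langle \pi(v_{i,1}),\ldots,\pi(v_{i,k_i})\rangle\bigr),
\]
which is again a finite union of linear sets, hence semilinear. This is the ``preserved by monoid homomorphisms'' assertion, spelled out on the defining data.

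The only delicate point — and the only real obstacle — is the reading of the hypothesis: semilinearity of $\psi(L)$ alone does not imply semilinearity of $\psi(L \cap R)$ in general (one can, for instance, arrange $\psi(L) = \NN^2$ while choosing the specific words in $L$ so that $L \cap (ab)^*$ encodes any prescribed subset of $\NN$ on the diagonal). So one must interpret the lemma in its intended setting, where $L$ lies in a cone and hence $L \cap R$ inherits semilinearity via the cone axioms. Once that is in place, the remainder is a routine verification.
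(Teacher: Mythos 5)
Your proposal is correct and follows exactly the route the paper intends (and leaves implicit, since the lemma is stated with a \qed): closure of the cone under intersection with regular languages gives semilinearity of $\psi(L\cap R)$, and applying the coordinate projection, a monoid homomorphism $\NN^k\to\NN^m$, to each linear piece $v_{i,0}+\langle v_{i,1},\ldots,v_{i,k_i}\rangle$ yields again a finite union of linear sets. Your caveat about the hypothesis is also well taken: as literally written (``$L$ semilinear'' rather than ``$L$ in a cone of semilinear languages''), the statement would not suffice to control $L\cap R$, so the reading you adopt is the one the paper actually uses in its applications.
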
 
For short we say that the projection of a regular slice of a semilinear language onto a nonempty subset of coordinates is semilinear. We call the composition of these projections with Parikh map as Parikh maps too.

\section{Nilpotent groups}\label{Sec:Nilpotent}

The goal of this section is to prove the following.

\begin{theorem}
\label{Thm:Nilpotent}
Let $\CC$ be a cone of semilinear languages. If the word problem of a
finitely generated virtually nilpotent group $G$ is in $\CC$, then $G$ is virtually abelian.
\end{theorem}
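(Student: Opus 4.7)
The plan is to reduce, via \reflem{Subgroups}, to the claim that the integer Heisenberg group
$H_3 = \group{x, y, z}{z = [x,y],\ [x,z],\ [y,z]}$
cannot have its word problem in any cone of semilinear languages. The main obstacle is extracting $H_3$ as a genuine \emph{subgroup} (not merely a quotient) of an arbitrary virtually nilpotent group that is not virtually abelian; the Parikh computation that finishes the argument is routine.

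The first step is to reduce to a finitely generated, torsion-free, non-abelian nilpotent group. Since $G$ is virtually nilpotent it contains a finite-index nilpotent subgroup, and then a finite-index torsion-free nilpotent subgroup (a standard fact for finitely generated nilpotent groups). By \reflem{Subgroups}(2) these subgroups still have their word problems in $\CC$. If $G$ is not virtually abelian, then neither is this subgroup, call it $N$; in particular $N$ is non-abelian.

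The second step is to locate a copy of $H_3$ inside $N$. Let $c \geq 2$ be the nilpotency class, so $\gamma_c(N)$ is a nontrivial central subgroup. Since $\gamma_c = [\gamma_{c-1}, N]$ is generated by commutators of the form $[u, v]$ with $u \in \gamma_{c-1}$ and $v \in N$, some such $[u, v]$ is nontrivial; set $x = u$, $y = v$, and $z = [x, y]$. Then $z$ lies in $Z(N)$, so the subgroup $K = \subgp{x, y}$ is $2$-generated, torsion-free, non-abelian and has nilpotency class at most $2$. A short computation with Malcev normal form then forces $K \isom H_3$: the abelianization $K/K'$ is torsion-free of rank exactly $2$ (rank at most $1$ would make $K$ abelian), and $K' = \subgp{z} \isom \ZZ$, with $[x, y] = z$ pinning down the central extension as the standard Heisenberg one.

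The third and final step is the Parikh contradiction. With generators $\Sigma = \{x^{\pm 1}, y^{\pm 1}, z^{\pm 1}\}$ for $H_3$ and the regular language $R = x^* y^* (x^{-1})^* (y^{-1})^* (z^{-1})^*$, the Heisenberg commutator identity gives $x^a y^b x^{-c} y^{-d} z^{-e} = z^{bc - e} x^{a - c} y^{b - d}$, which is trivial in $H_3$ exactly when $a = c$, $b = d$ and $e = ab$. Projecting the Parikh image of $\WP(H_3) \cap R$ onto the coordinates counting $x$, $y$ and $z^{-1}$ therefore gives $\{(a, b, ab) : a, b \in \NN\}$, the graph of multiplication. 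This set is famously not semilinear (semilinear subsets of $\NN^k$ are precisely the Presburger-definable ones, and multiplication is not Presburger-definable), contradicting \reflem{Useful}.
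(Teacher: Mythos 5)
Your proof is correct, and its engine coincides with the paper's: a central commutator $z=[x,y]$ of infinite order yields the identity $x^ay^bx^{-a}y^{-b}z^{-ab}=1$, and intersecting the word problem with a regular language of the shape $x^*y^*(x^{-1})^*(y^{-1})^*(z^{-1})^*$ and applying a Parikh map produces the graph of multiplication, which is not semilinear. The difference lies entirely in the reduction. The paper never passes to a torsion-free subgroup and never exhibits an explicit Heisenberg subgroup: it works with the upper central series of the nilpotent group itself and proves (\reflem{ghz}) that some $g\in G$ and $h\in Z_2$ have $[g,h]$ of infinite order --- the point being that if every such commutator were torsion, then $Z_2/Z_1$ would be finite, and since a finitely generated nilpotent group with finite center is finite, $G$ would be virtually abelian. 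The Parikh argument is then run directly in $G$ on the words $g^mh^ng^{-m}h^{-n}z^{mn}$, and the non-semilinearity of $\{(m,mn)\}$ is proved by an elementary slope argument (\reflem{iij}) rather than by appeal to Presburger arithmetic. Your route instead invokes two standard external facts (finitely generated nilpotent groups are virtually torsion-free; semilinear sets are exactly the Presburger-definable ones) and in exchange gets to do the final computation inside a single concrete group, $H_3$, where uniqueness of normal forms makes the determination of $\WP(H_3)\cap R$ completely transparent; your identification of $\subgp{x,y}\isom H_3$ (torsion-free of class $2$, with $K'\isom\ZZ$ and $K/K'\isom\ZZ^2$ pinned down by commutating a putative relation $x^ay^b\in K'$ against $x$ and $y$) is correct as sketched. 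Both arguments are complete; the paper's is more self-contained, while yours localizes the entire difficulty in the Heisenberg group.
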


Assume $G$ is virtually nilpotent but not virtually abelian with word problem in a semilinear cone $\CC$. By \reflem{Subgroups} we may assume without loss of generality that $G$ is nilpotent; that is, $G$ has an ascending central series
\[
1=Z_0 \subset Z_1 \subset \cdots \subset Z_k = G
\]
where $Z_{i+1}/Z_i$ is the center of $G/Z_i$. If $k=1$, there is nothing to prove, so we assume $k\ge 2$.

Recall the notation for the commutator $[g,h] = g^{-1}h^{-1}gh$, and recall also that subgroups of a finitely generated nilpotent group are themselves finitely generated. We divide the rest of the proof into two lemmas. 

\begin{lemma}\label{Lem:ghz}
There exist $g\in G$, $h\in Z_2$ with $[g,h]$ of infinite order. 
\end{lemma}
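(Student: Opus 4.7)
The plan is to argue by contradiction: suppose every commutator $[g,h]$ with $g\in G$ and $h\in Z_2$ has finite order, and derive that $G$ is virtually abelian, contradicting the standing hypothesis.

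Since $Z_2/Z_1$ is by definition the center of $G/Z_1$, any such commutator $[g,h]$ lies in $Z_1$. The group $Z_1$ is finitely generated abelian (as the center of a finitely generated nilpotent group), so its torsion subgroup $T$ is finite; under the contradiction hypothesis, $[G, Z_2]\subseteq T$. Because each $[g,h]$ is central in $G$, the standard identity $[g, h_1 h_2] = [g, h_2][g, h_1]^{h_2}$ simplifies to $[g, h_1][g, h_2]$, and iterating gives $[g, h^N] = [g, h]^N$ for every integer $N$. Setting $N=|T|$ makes $[g, h^N]=1$ for every $g\in G$, so $h^N\in Z_1$. Thus $Z_2/Z_1$ is a finitely generated abelian group of bounded exponent, hence finite. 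But $Z_2/Z_1 = Z(G/Z_1)$, so $G/Z_1$ is a finitely generated nilpotent group with finite center.

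The remaining step---concluding $G/Z_1$ is itself finite---is what I expect to be the main obstacle, and it rests on the standard fact that a finitely generated infinite nilpotent group has infinite center. I would prove this by induction on the nilpotency class $c$: the last nontrivial term $\gamma_c$ of the lower central series is central, so if $\gamma_c$ is infinite we are done; otherwise $\gamma_c$ is finite, and applying the induction hypothesis to $H/\gamma_c$ (still infinite, of smaller class) yields $\bar y\in H$ of infinite order with $[\bar y, H]\subseteq \gamma_c$. Setting $m=|\gamma_c|$, the centrality of $\gamma_c$ gives $[y^m, x] = [y,x]^m = 1$ for all $x\in H$, so $y^m$ is a central element of infinite order. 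Applied to $H=G/Z_1$ this forces $G/Z_1$ finite, so $Z_1$ has finite index in $G$ and $G$ is virtually abelian---the desired contradiction.
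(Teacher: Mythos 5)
Your proposal is correct and follows essentially the same route as the paper: assume all such commutators have finite order, bound their orders by the (finite) torsion subgroup of $Z_1$, deduce $[g,h^N]=[g,h]^N=1$ so that $Z_2/Z_1$ is finite, and then invoke the fact that a finitely generated nilpotent group with finite center is finite to conclude $G/Z_1$ is finite, contradicting that $G$ is not virtually abelian. The only difference is that the paper cites this last fact (Lemma~0.1 of McLain--Hall, reference \cite{MR0283082}) whereas you supply a correct inductive proof of it.
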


\begin{proof}
Suppose for all choices of $g,h$ as above, $[g,h]$ has finite
order. Then every $[g,h]$ lies in the torsion subgroup of $Z_1$ whence
the orders of the $[g,h]$'s are uniformly bounded by some integer
$m$. It follows that $[g,h^m] = [g,h]^m = 1$ for all $g,h$. But then
$Z_2/Z_1$ is a finitely generated abelian torsion group and hence finite. By~\cite[Lemma 0.1]{MR0283082} a finitely generated nilpotent group with finite
center is finite. Thus $G/Z_1$ is finite and $Z_1$ is abelian of
finite index, which contradicts our assumption that $G$ is not virtually abelian.
\end{proof}

Without loss of generality $\Sigma$
contains letters $a_g, a_h, a_z$ which project to $g,h,z$ respectively. Let $W = \WP(G)$ be the word problem of $G$.
\[ W\cap a_g^*a_h^*(a_g^{-1})^*(a_h^{-1})a_z^* = \{a_g^ma_h^n(a_g^{-1})^m(a_h^{-1})^na_z^{mn}\}.\]
Since $W$ is semilinear by hypothesis, \reflem{Useful} implies that $S =\{(m, mn) \mid m,n \in \NN \}$
is semilinear. Thus the following lemma completes the proof of \refthm{Nilpotent}.

\begin{lemma}\label{Lem:iij}
$S = \{(m, mn) | m, n \in \NN\}$ is not semilinear.
\end{lemma}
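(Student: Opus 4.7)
The plan is to argue by contradiction: assume $S$ is semilinear and write $S = L_1 \cup \cdots \cup L_N$ with each $L_j = v_0^{(j)} + \langle v_1^{(j)}, \ldots, v_{\ell_j}^{(j)} \rangle$. I aim to show that each $L_j$ is forced to lie inside a single line in $\NN^2$, either a vertical line $x = c$ or a ray $y = rx$ through the origin, and then to derive a contradiction by observing that a single ``column'' of $S$ contains far too many points to be covered by the resulting finite family of lines.

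The heart of the argument is a structural lemma: if a linear set $L = v_0 + \langle v_1, \ldots, v_\ell \rangle \subseteq S$ has some period $v_i$ with nonzero first coordinate, then after relabelling so $v_1 = (a_1, b_1)$ with $a_1 \geq 1$, both $v_0$ and every other $v_i = (a_i, b_i)$ are parallel to $v_1$. I would prove this by applying the divisibility condition to the arithmetic progression $v_0 + t v_1 \in S$: the requirement $(a_0 + t a_1) \mid (b_0 + t b_1)$ implies that $(a_0 + t a_1)$ divides the $t$-independent quantity $a_1 b_0 - b_1 a_0$, and since $a_0 + t a_1 \to \infty$ this forces $a_1 b_0 = b_1 a_0$. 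The same trick applied to $v_0 + t v_1 + v_i$ gives $a_1 b_i = b_1 a_i$. Consequently $L$ lies on the single line $\{a_1 y = b_1 x\}$ through the origin. In the remaining case every period of $L$ has vanishing first coordinate, so $L$ sits on the vertical line $x = a_0$.

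With the structural lemma in hand, $S$ is covered by finitely many vertical lines $x = c_1, \ldots, x = c_k$ together with finitely many rays $y = r_1 x, \ldots, y = r_M x$. Pick any integer $p$ strictly larger than each $c_j$. The column $\{p\} \times p\NN$ is contained in $S$ and is infinite, yet it meets none of the vertical lines in the cover and it meets each ray in at most one point. Hence only finitely many points of this column are covered, contradicting $S = \bigcup_j L_j$.

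The main obstacle is the structural lemma: the work lies in converting the arithmetic divisibility condition $(a_0 + t a_1) \mid (b_0 + t b_1)$ for all $t \in \NN$ into the purely geometric statement that the base point and all periods of $L$ are collinear with the origin. Once that collinearity is established, the column-versus-ray counting argument in the last paragraph is essentially immediate.
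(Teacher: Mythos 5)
Your proof is correct and complete. It shares its skeleton with the paper's argument --- both split the linear pieces according to whether some period has nonzero first coordinate, note that the vertical pieces occupy only finitely many columns, and then derive a contradiction from a single column of $S$ lying far to the right --- but your key lemma is genuinely different and sharper. The paper uses only the soft observation that two distinct points of $S$ in column $m$ have second coordinates differing by at least $m$; this rules out a linear piece containing both a period $(r,s)$ with $r\neq 0$ and a period $(0,t)$ with $t\neq 0$, and yields merely that each non-vertical piece has bounded slope, the contradiction coming from the unboundedness of slopes within far-out columns of $S$. You instead exploit the full divisibility $(a_0+ta_1)\mid(b_0+tb_1)$ to force the determinants $a_1b_0-a_0b_1$ and $a_1b_i-a_ib_1$ to vanish, so that each non-vertical piece lies on a single ray through the origin, and the endgame becomes the essentially trivial count that such a ray meets the column $\{p\}\times p\NN$ in at most one point. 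The extra arithmetic in your structural lemma buys a strictly stronger conclusion (a cover of $S$ by finitely many lines) and a cleaner finish; both arguments are valid as written.
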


\begin{proof}
Observe that if distinct elements of $S$ share the same first
coordinate, then their second coordinates differ by at least the size
of that first coordinate. It follows that $S$ does not contain a
linear subset of the form
\[
(p, q) + \langle (r, s), (0, t) \rangle
\]
with $r \neq 0 \neq t$. Indeed $S$ would then contain both $(p+kr,
q+ks)$ and $(p+kr, q+ks+t)$ for all integers $k> 0$ contrary to our
observation above.

Thus either all the module generators for any linear subset of $S$
have first coordinate $0$ or none do (as we may safely assume that
$(0,0)$ is not a generator). Modules of the first type are contained
in $\{0\}\times \NN$, and the slopes of elements (thought of as vectors
based at the origin) of a module of the second type are bounded above
by the maximum of the slopes of its generators.

We see that if $S$ were semilinear then the slopes of all elements
whose first coordinates are large enough would be uniformly bounded,
which is not the case.
\end{proof}

\section{Fundamental groups of hyperbolic three-manifolds}
\label{Sec:Hyperbolic}

\subsection{Distortion}
\label{Sec:Distortion}

We begin with a simple example that illustrates the main idea of this
section.  Suppose that $G = \BS(1, 2) = \group{a, t}{tat^{-1}a^{-2}}$
is a Baumslag-Solitar group~\cite{MR0142635}.  We claim that $W = \WP(G)$ is not
multiple context free ($\MCF$).  Consider the regular language $R =
t^* a (t^{-1})^* A^*$ and form the rational slice $W \cap R$.
Abelianizing tells us that in any word $w \in W \cap R$ the powers of
$t$ and $T$ appearing must be equal.  Thus we have $W \cap R = \{ t^n
a t^{-n} a^{-2^n} \st n \in \NN \}$.  We now apply the Parikh map
$\psi = (|\cdot|_t, |\cdot|_{a^{-1}})$.  The image $\psi(W \cap R)$ is
the graph of $f(n) = 2^n$, lying inside of $\NN^2$.  Clearly any line
meets the image in at most two points.  Thus $\psi(W \cap R)$ is not
semilinear and so $W$ is not $\MCF$ by \reflem{Useful}.

Suppose that $G$ is a group and $H$ is a subgroup.  Fix a generating
set $\Sigma$ for $G$ that contains a generating set $\Sigma_H$ for
$H$.  Let $\Gamma$ and $\Gamma_H$ be the corresponding Cayley graphs.
The inclusion of $H$ into $G$ gives a Lipschitz map $\Gamma_H \to
\Gamma$.  The failure of this map to be bi-Lipschitz measures the
\emph{distortion} of $H$ inside of $G$.  In the $\BS(1,2)$ example,
the distortion of the subgroup $H = \subgp{a}$ is exponentially large.

The general principle is as follows.  If $G$ has a distorted subgroup
$H$, and $H$ has a sufficiently ``regular'' sequence of elements, then
$\WP(G)$ is not $\MCF$.

\begin{question}
\label{Que:Distorted}
Suppose that $G$ has a subgroup $H$ with super-linear distortion.
Does this imply that $\WP(G)$ is not $\MCF$?
\end{question}

\subsection{Fundamental groups}
\label{Sec:Fundamental}

We say that a manifold $M$ is \emph{hyperbolic} if $M$ admits a
Riemannian metric, of constant sectional curvature minus one, which is
complete and which has finite volume.  Using deep results from
low-dimensional topology we will prove the following.

\begin{theorem}
\label{Thm:Hyperbolic}
Suppose that $M$ is a hyperbolic three-manifold.  Then $\WP(\pi_1(M))$
is not $\MCF$.
\end{theorem}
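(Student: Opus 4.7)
The plan is to reduce to a virtually fibered finite cover of $M$ and then use pseudo-Anosov dynamics to produce a non-semilinear Parikh image of a regular slice of the word problem, in the spirit of the $\BS(1,2)$ example of \refsec{Distortion}.

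By Agol's virtual fibering theorem, $\pi_1(M)$ has a finite-index subgroup $K$ isomorphic to $\pi_1(F) \rtimes_\phi \ZZ$, where $F$ is a compact surface of negative Euler characteristic and $\phi$ is a pseudo-Anosov mapping class on $F$. By part~(2) of \reflem{Subgroups}, it suffices to prove that $\WP(K)$ is not $\MCF$. Choose a symmetric generating set $\Sigma = \Sigma_F \sqcup \{t, t^{-1}\}$ for $K$ in which $\Sigma_F$ generates $\pi_1(F)$ and contains a letter $a$ whose image in $\pi_1(F)$ is non-trivial. A standard consequence of pseudo-Anosov dynamics is that the $\pi_1(F)$-word length $|\phi^n(a)|_F$ grows exponentially: there exist $c > 0$ and $\lambda > 1$ with $|\phi^n(a)|_F \geq c\lambda^n$ for all $n \geq 0$. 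This follows from the exponential growth of the hyperbolic length of an essential closed curve under iteration of a pseudo-Anosov, combined with the standard quasi-isometry between word length in $\pi_1(F)$ and hyperbolic length of a geodesic representative.

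Now consider the regular language $R = t^* a (t^{-1})^* \Sigma_F^*$. Any $w \in \WP(K) \cap R$ must take the form $t^n a t^{-n} u$ with $u \in \Sigma_F^*$ representing $\phi^n(a)^{-1}$ in $\pi_1(F)$: the two $t$-exponents match because the abelianization $K \to \ZZ$ that kills $\pi_1(F)$ sends $w$ to zero. Suppose for contradiction that $\WP(K)$ is $\MCF$, hence semilinear. By \reflem{Useful}, $\psi(\WP(K) \cap R)$ is semilinear in $\NN^\Sigma$, and semilinearity is preserved by the monoid morphism $\NN^\Sigma \to \NN^2$ given by $\vec v \mapsto \bigl(\vec v_t,\, \sum_{s \in \Sigma_F}\vec v_s\bigr)$, so its image $Q \subset \NN^2$ is semilinear. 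Every $(n, k) \in Q$ has $k \geq |\phi^n(a)|_F \geq c\lambda^n$, and each fiber $Q \cap \{x = n\}$ is non-empty (witnessed by any $u$ representing $\phi^n(a)^{-1}$). Hence $\min\{k : (n, k) \in Q\} \geq c\lambda^n$.

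On the other hand, the minimum of the second coordinate over the fiber $\{x = n\}$ of any semilinear subset of $\NN^2$ is bounded above by a piecewise linear function of $n$: each coset $v_i + \Lambda_i$ in the semilinear decomposition contributes fiber minima that are linear in $n$, by a routine linear-programming argument applied to the finitely many generators of $\Lambda_i$. This contradicts the exponential lower bound on $Q$, so $\WP(K)$, and hence $\WP(\pi_1(M))$, is not $\MCF$. The main obstacle is the appeal to Agol's virtual fibering theorem, a deep result in low-dimensional topology; granted this input, the remainder is a direct application of the semilinearity obstruction combined with the exponential stretching of pseudo-Anosov homeomorphisms.
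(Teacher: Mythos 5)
Your proposal is correct and follows essentially the same route as the paper: reduce to a fibered finite cover via virtual fibering and \reflem{Subgroups}, slice $\WP$ by $t^*a(t^{-1})^*\Sigma_F^*$, and use the exponential growth of $|\phi^n(a)|$ under a pseudo-Anosov to defeat semilinearity of the two-coordinate Parikh image. The only nit is that in the non-compact (cusped) case the fiber is a punctured, not compact, surface and the relevant virtual fibering result is due to Wise rather than Agol; this does not affect the argument.
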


\noindent
Before giving the proof we provide the topological background.
Suppose that $S$ is a hyperbolic surface.  Suppose that $f \from S \to
S$ is a homeomorphism.  We form $M_f$, a \emph{surface bundle over the
  circle}, by taking $S \cross [0, 1]$ and identifying $S \cross
\{1\}$ with $S \cross \{0\}$ using the map $f$.  The gluing map $f$ is
called the \emph{monodromy} of the bundle.  The surface $S$ is called
the \emph{fiber} of the bundle; in a small abuse of notation $M_f$ is
also simply called a \emph{fibered} manifold.

Let $\phi \from \pi_1(S) \to \pi_1(S)$ be the homomorphism induced by
$f$.  Note that
\[
\pi_1(M_f) \isom \pi_1(S) \semi_{\phi} \ZZ
            = \group{\Sigma, t}{tat^{-1} = \phi(a),\, a \in \Sigma}
\]
where $\Sigma$ generates $\pi_1(S)$.

It is a result of Thurston~\cite[Theorem~5.6]{MR648524} that a
fibered manifold $M_f$ is hyperbolic if and only if the monodromy $f$
is \emph{pseudo-Anosov}.  Instead of giving the definition here, we
will simply note an important consequence~\cite[Theorem~5]{MR956596}:
If $f \from S \to S$ is pseudo-Anosov then, for any letter $a \in
\Sigma$, the word-lengths of the elements $\phi^n(a)$ grow
exponentially.

One sign of the importance of surface bundles to the theory of
three-manifolds is Thurston's virtual fibering
conjecture~\cite[Question~6.18]{MR648524}: every hyperbolic
three-manifold has a finite cover which is fibered.  This remarkable
conjecture is now a theorem, due to
Wise~\cite[Corollary~1.8]{MR2558631} in the non-compact case and due
to Agol~\cite[Theorem~9.2]{MR3104553} in the compact case.  (For a
detailed discussion, including many references, please
consult~\cite{MR3444187}.)  Note that any finite cover of a hyperbolic
manifold is again hyperbolic.  Thus, by Thurston's theorem, the
monodromy of the fibered finite cover is always pseudo-Anosov.

We are now ready for the proof. 

\begin{proof}[Proof of \refthm{Hyperbolic}]
Suppose that $M$ is a hyperbolic three-manifold.  Appealing to
\reflem{Subgroups} and to the solution of the virtual fibering conjecture we may replace $M$ by a fibered finite cover $M_f$, with fiber $S$.
Fix $\Sigma$ a generating set for $\pi_1(S)$ and let $t$ be the stable
letter, representing the action of the monodromy.  Thurston tells us
that $f$ is pseudo-Anosov, and thus for any generator $a \in \Sigma$
the elements $\phi^n(a)$ grow exponentially in the word metric on
$\pi_1(S)$.

So $G = \pi_1(M_f)$ is generated by $\Sigma \cup \{t\}$ and has the
presentation given above.  Set $W = \WP(G)$ and set $R = t^* a
(t^{-1})^* \Sigma^*$.  Homological considerations imply that
\[
W \cap R = \{t^n a t^{-n} w^{-1} \st n \in \NN, w \in \Sigma^*, w =_G \phi^n(a)\}.
\]
Define $|w|_\Sigma = \sum_{b \in \Sigma} |w|_b$ and consider the
Parikh map $\psi = (|\cdot|_t, |\cdot|_\Sigma)$.  The image $\psi(W
\cap R) \subset \NN^2$ contains, and lies above, the graph of an
exponentially growing function.  Thus its intersection with any non-vertical line is finite.  We deduce from \reflem{Useful} that $W$ is not $\MCF$.
\end{proof}

\begin{remark}
\label{Rem:OtherGeometries}
Five of the remaining seven Thurston geometries are easy to dispose
of.  In $S^3$ geometry, all fundamental groups are finite.  In $S^2
\cross \RR$ and in $\EE^3$ geometry all fundamental groups are
virtually abelian and so they are all MCF.   In $\Nil$ geometry all fundamental groups are
virtually nilpotent yet not virtually abelian.  Thus
\refthm{Nilpotent} applies; none of these fundamental groups are MCF.  In $\Sol$ geometry all manifolds are
finitely covered by a torus bundle with Anosov monodromy.  Thus the
discussion of this section applies and these groups do not have word
problem in $\MCF$.

The question is open for the geometries $\HH^2 \cross \RR$ and
$\PSL(2, \RR)$ geometry, for both uniform and non-uniform lattices.

\end{remark}

We end this section with another obvious question. 

\begin{question}
\label{Que:Surface}
Suppose that $S_g$ is the closed, connected, oriented surface of genus
$g > 1$.  Is the word problem for $\pi_1(S_g)$ multiple context free?
\end{question}

\section{Right-angled Artin groups}
\label{Sec:RAAG}

Let $\mathcal{G}$ be the class of graphs containing a point and closed under the following operations:
\begin{itemize}
\item If $\Gamma,\Gamma'\in\mathcal{G}$, then $\Gamma\sqcup\Gamma'\in \mathcal{G}$,
\item if $\Gamma\in \mathcal{G}$, then $\Gamma\ast\{v\}\in\mathcal{G}$.
\end{itemize}
Here $\sqcup$ denotes disjoint union and $\Gamma\ast\{v\}$ is the join (defined below) of $\Gamma$ and $\{v\}$.
This section will be devoted to proving the following theorem:

\begin{theorem}
\label{Thm:RAAG}
Let $\Gamma$ be a graph and $A(\Gamma)$ the associated \mbox{RAAG}. If $A(\Gamma)$ has multiple context-free word problem, then $\Gamma\in \mathcal{G}$.
\end{theorem}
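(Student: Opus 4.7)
The plan is to prove the contrapositive: if $\Gamma \notin \mathcal{G}$ then $\WP(A(\Gamma))$ is not multiple context-free.  I would first observe that $\mathcal{G}$ is exactly the class of cographs, i.e., finite graphs with no induced $P_4$; this is the classical characterization of cographs as graphs obtained from singletons by iterating disjoint union and join (Seinsche's theorem).  Hence the hypothesis $\Gamma \notin \mathcal{G}$ furnishes an induced copy of $P_4$ inside $\Gamma$.  By the standard fact that the subgroup of $A(\Gamma)$ generated by the vertices of an induced subgraph $\Lambda$ is canonically isomorphic to $A(\Lambda)$, combined with \reflem{Subgroups}, the theorem reduces to showing that $\WP(A(P_4)) \notin \MCF$, where $A(P_4) = \group{a,b,c,d}{[a,b],\,[b,c],\,[c,d]}$.

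Following the template of the previous two sections, the strategy for $A(P_4)$ is to exhibit a regular language $R \subset \Sigma^*$ and a Parikh projection $\pi$ such that $\pi(\psi(\WP(A(P_4)) \cap R))$ fails to be semilinear.  Since $\MCF$ languages are semilinear by Vijay-Shanker, \reflem{Useful} would then contradict $\WP(A(P_4)) \in \MCF$.

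The main obstacle is producing the right $R$.  The nilpotent argument of \refsec{Nilpotent} does not apply in a straightforward way: the commutator $[a,d] \in A(P_4)$ is not central, so the word $a^m d^n a^{-m} d^{-n}$ does not equal $[a,d]^{mn}$ in $A(P_4)$, and the obvious analogue of the slice considered there does not consist of trivial words.  The distortion trick of \refsec{Distortion} also fails directly, since $A(P_4)$ is CAT(0) and has no distorted cyclic subgroups, and one cannot invoke \refthm{Nilpotent} on a subgroup either, since every nilpotent subgroup of a RAAG is free abelian.  The plan is instead to exploit the specific combinatorics of $P_4$: the three commutations $[a,b] = [b,c] = [c,d] = 1$ together with the non-commutations of the pairs $\{a,c\}$, $\{a,d\}$, $\{b,d\}$.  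By conjugating $a$ and $d$ by suitably chosen words in the intermediate commuting generators $b$ and $c$, the aim is to build a family of trivial words in $A(P_4)$ whose Parikh vectors, under a suitable projection, realize a non-semilinear subset of $\NN^2$---for example, a subset whose intersection with each non-vertical line is bounded, as in \reflem{iij}.  Verifying that the forced cancellation in $A(P_4)$ actually produces such a non-semilinear projection is the principal technical step.
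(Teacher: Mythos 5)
There is a genuine gap, and it occurs at the very first step. Your reduction rests on the claim that $\mathcal{G}$ is exactly the class of cographs, so that $\Gamma\notin\mathcal{G}$ yields an induced $P_4$. This is false: cographs are closed under \emph{arbitrary} joins (equivalently, under complementation), whereas $\mathcal{G}$ is only closed under joining with a \emph{single vertex}. The square $C_4$ is a cograph (it is the join of two edgeless two-vertex graphs and has no induced $P_4$), but $C_4\notin\mathcal{G}$, since every connected graph in $\mathcal{G}$ other than $K_1$ must have a vertex adjacent to all others and $C_4$ has none. The correct characterization is that $\mathcal{G}$ is the class of $\{P_4, C_4\}$-free (``trivially perfect'') graphs. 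Consequently your contrapositive must also rule out an induced square, i.e.\ you must additionally prove that $\WP(F_2\times F_2)$ is not $\MCF$ --- and this is an essential, separate half of the paper's argument. The paper handles it by passing to the fibre product $P=\{(u,v)\in F_2\times F_2 \mid f(u)=f(v)\}$ of the abelianization $f\from F_2\to\ZZ^2$, generated by $(a,a)$, $(b,b)$, $(aba^{-1}b^{-1},1)$, which is quadratically distorted by a theorem of Olshanskii--Sapir; a suitable regular slice and Parikh projection then reproduce the non-semilinear set $\{(n,nm)\}$ of \reflem{iij}. Even granting both obstructions, you still need the deduction that a connected $\{P_4,C_4\}$-free graph lies in $\mathcal{G}$, which the paper gets from the cograph decomposition theorem (connected plus $P_4$-free implies join) together with the observation that two join factors each of diameter greater than one would create an induced square.

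The second gap is that your $P_4$ case is only a plan: you correctly identify that the nilpotent and cyclic-distortion tricks fail, but the ``principal technical step'' you defer is the entire content of the theorem. The paper's solution is to pass to the Bestvina--Brady kernel $BB(P_4)$ of the map $A(P_4)\to\ZZ$ sending every generator to $1$; by Dicks--Leary this kernel is free of rank $3$ on $x=ab^{-1}$, $y=bc^{-1}$, $z=cd^{-1}$, and an explicit computation shows that any word in these generators equal to $(ad)^n(a^{-1}d^{-1})^n$ must contain at least $2n^2$ occurrences of $y$. Intersecting $\WP(A(P_4))$ with the regular language $(ad)^*(a^{-1}d^{-1})^*\{x,y,z\}^*$ and applying the Parikh projection $(|\cdot|_a, |\cdot|_y)$ then gives a set lying on or above a quadratic curve, which is not semilinear. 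So the distortion philosophy of \refsec{Distortion} does in fact apply to $A(P_4)$ --- just via the distorted free subgroup $BB(P_4)$ rather than a cyclic one. Without this construction (or a substitute), and without the $F_2\times F_2$ case, the proposal does not establish the theorem.
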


This theorem would have a much cleaner statement if one could prove the following conjecture:
\begin{conjecture}\label{f2zconj}
The word problem for $F_2\times \ZZ$ is not MCF. 
\end{conjecture}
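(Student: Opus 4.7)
The plan is to follow the same blueprint used in \refthm{Nilpotent} and \refthm{Hyperbolic}: assume $\WP(F_2 \cross \ZZ)$ is in $\MCF$, hence semilinear by~\cite{Vijay-Shanker}, and produce a regular language $R$ whose Parikh image, after intersection and projection, fails to be semilinear, contradicting \reflem{Useful}. Working with the presentation $\group{a,b,z}{[a,z],[b,z]}$, I would first record the key structural fact: since $z$ is central, a word $w$ over $\{a,b,z\}^{\pm}$ represents the identity if and only if the net $z$-exponent of $w$ is zero and its $F_2$-projection $\sigma(w)$, obtained by deleting every $z^{\pm 1}$, freely reduces to $1$.

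Next I would search systematically for a regular language $R$, most naturally of the form $u_1^* u_2^* \cdots u_k^*$ with carefully chosen $u_i \in (\{a,b,z\}^{\pm})^*$, designed so that the two constraints above force a non-linear relation among the exponents. The target is a slice analogous to $\{a^m b^n a^{-m} b^{-n} z^{mn}\}$ from \refsec{Nilpotent}: the blocks $u_i$ must be arranged so that triviality in $F_2$ couples one exponent to a product of others, while the central $z$ transports that product into a letter count. Candidates worth testing include $(a^p z)^*$-type blocks, iterated commutator words, and slices that force long $F_2$-words to appear as products of short ones.

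The main obstacle, and almost surely the reason the conjecture has resisted the techniques of this paper, is precisely the centrality of $z$. Formally, $\WP(F_2 \cross \ZZ) = \sigma^{-1}(\WP(F_2)) \cap \tau^{-1}(\WP(\ZZ))$, where $\tau$ deletes $a^{\pm 1}$ and $b^{\pm 1}$. Intersecting with any regular $R$ and projecting onto the $F_2$-coordinates lands inside the Parikh image of $\WP(F_2) \cap \sigma(R)$, a context-free language whose Parikh image is semilinear by Parikh's theorem; the $z$-coordinates contribute only the linear balance $|w|_z = |w|_{z^{-1}}$. Any successful slice must therefore couple the \emph{positions} of $z$ occurrences to the freely-reduced form of the $F_2$-part in a way that extracts a non-linear constraint from the interleaving, not just from the Parikh vector. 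Unlike $\BS(1,2)$ or hyperbolic three-manifold groups, $F_2 \cross \ZZ$ has no distorted subgroup and no endomorphism that stretches word-length exponentially, so the engines that drove \refthm{Hyperbolic} and \refthm{Nilpotent} are simply unavailable.

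If Parikh-based arguments cannot by themselves reach the conjecture, I would fall back on the $2k$-pumping conditions satisfied by $\MCF(k)$ languages, trying to exhibit an infinite family of trivial words of growing length for which no such pumping can preserve both free reducibility of the $F_2$-projection and $z$-balance simultaneously. As a further alternative one could search, via \reflem{Subgroups}, for a well-chosen finite-index supergroup or a homomorphic image of $F_2 \cross \ZZ$ for which non-$\MCF$-ness can be established by more direct means. My honest expectation is that a complete proof will require a genuinely new invariant, strictly finer than semilinearity, that distinguishes $\MCF$ from this particular free-by-central interaction; identifying such an invariant is the hardest step of the plan.
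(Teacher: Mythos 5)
There is no proof in the paper for you to be compared against: the statement is Conjecture~\ref{f2zconj}, which the paper explicitly leaves open, noting only that by~\cite{kropholler_freeprod_2017} it is equivalent to the clean classification of RAAGs with $\MCF$ word problem. Your proposal, as you concede in your final paragraph, is likewise not a proof: the regular language $R$ producing a non-semilinear sliced Parikh image is never exhibited, and the fallback routes remain sketches. So the concrete gap is simply that no argument is given --- but your diagnosis of \emph{why} the paper's method must fail is correct, and in fact can be sharpened from your containment remark into a rigorous impossibility statement. With $\sigma$ deleting $z^{\pm 1}$ and $\tau$ deleting $a^{\pm 1}, b^{\pm 1}$ as in your third paragraph, membership of a word $w$ in $\tau^{-1}(\WP(\ZZ))$ depends only on the Parikh vector of $w$ (the $z$-exponent sum must vanish). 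Hence for every regular $R$,
\[
\psi\bigl(\WP(F_2 \cross \ZZ) \cap R\bigr) \;=\; \psi\bigl(\sigma^{-1}(\WP(F_2)) \cap R\bigr) \cap \{v \st v_z = v_{z^{-1}}\}.
\]
The first set on the right is the Parikh image of a context-free language, hence semilinear by Parikh's theorem, and semilinear sets are closed under intersection (Ginsburg--Spanier, via Presburger definability); projections preserve semilinearity. So \emph{every} regular slice of $\WP(F_2 \cross \ZZ)$, and every coordinate projection thereof, is semilinear: the obstruction driving \refthm{Nilpotent}, \refthm{Hyperbolic}, and Theorems~\ref{Thm:nol3} and~\ref{Thm:nof2f2} provably vanishes here, which is exactly why the paper can only state the conjecture.

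Two further corrections to your fallback plans. First, \reflem{Subgroups} cannot help: finitely generated subgroups of $F_2 \cross \ZZ$ are undistorted and again of the form (free)${}\cross \ZZ^m$, and finite-index supergroups are commensurable with $F_2 \cross \ZZ$ itself, so both directions are circular --- in contrast to $F_2 \cross F_2$, where the quadratically distorted fibre product powers \refthm{Thm:nof2f2}\footnote{That is, Theorem~\ref{Thm:nof2f2}.}. Second, word problems of homomorphic images are not inherited within a cone ($\sigma(\WP(G))$ is not $\WP(G/N)$), so passing to quotients is also a dead end. Your remaining route --- pumping or copying conditions specific to $\MCF(k)$ languages that see the \emph{interleaving} of $z$ with the free part rather than mere letter counts --- is the only one of your suggestions consistent with the obstruction above, and your closing assessment that a genuinely finer-than-semilinear invariant is required is, as far as the paper knows, exactly right.
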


This would prove (and by work of \cite{kropholler_freeprod_2017} is equivalent to the following):

\begin{conjecture}
A RAAG $A(\Gamma)$ has MCF word problem if and only if $\Gamma$ is a disjoint union of cliques.
\end{conjecture}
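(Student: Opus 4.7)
The plan is to deduce this conjecture from Conjecture~\ref{f2zconj}, making explicit the equivalence attributed to~\cite{kropholler_freeprod_2017}. The key combinatorial fact is that a finite simple graph is a disjoint union of cliques if and only if it has no induced $P_3$, the path on three vertices.

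For the ``only if'' direction, suppose that $A(\Gamma)$ has $\MCF$ word problem but, for contradiction, that $\Gamma$ is not a disjoint union of cliques. Pick vertices $a,b,c$ of $\Gamma$ inducing a $P_3$, with $a$ adjacent to $b$, $b$ adjacent to $c$, and $a$ not adjacent to $c$. By the standard vertex-subgroup theorem for RAAGs (Baudisch, Servatius), the subgroup $\langle a,b,c\rangle$ of $A(\Gamma)$ is canonically isomorphic to $A(P_3) = \group{a,b,c}{[a,b],[b,c]}$. Since $b$ is central in $A(P_3)$ and the quotient by $\subgp{b}$ is the free group $F_2 = \subgp{a,c}$, with an obvious splitting $a\mapsto a$, $c\mapsto c$, we have $A(P_3) \isom F_2 \cross \ZZ$. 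By \reflem{Subgroups}, $\WP(F_2 \cross \ZZ)$ is then $\MCF$, contradicting Conjecture~\ref{f2zconj}.

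For the ``if'' direction, suppose $\Gamma = K_{n_1}\sqcup\cdots\sqcup K_{n_k}$ is a disjoint union of cliques. Since disjoint union on the graph side corresponds to free product on the RAAG side, and $A(K_n)\isom \ZZ^n$, we have $A(\Gamma) \isom \ZZ^{n_1}\ast\cdots\ast\ZZ^{n_k}$. By Meng-Che Ho's theorem, cited after \refthm{Nilpotent}, each factor $\ZZ^{n_i}$ has $\MCF$ word problem; by~\cite{kropholler_freeprod_2017}, the class of finitely generated groups with $\MCF$ word problem is closed under free products, and so $A(\Gamma)$ has $\MCF$ word problem.

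The main obstacle is Conjecture~\ref{f2zconj} itself. The regular-slice-plus-non-semilinear-Parikh strategy of Sections~\ref{Sec:Nilpotent} and~\ref{Sec:Hyperbolic} does not obviously attack $F_2\cross\ZZ$: its abelianization $\ZZ^3$ is $\MCF$ by Ho's result, and no finitely generated subgroup of $F_2 \cross \ZZ$ is superlinearly distorted, so the $\BS(1,2)$-style argument of \refsec{Distortion} finds no foothold. A successful proof would likely require a genuinely new non-semilinear invariant of $\WP(G)$ detecting the way the central $\ZZ$-factor interacts with the free factor $F_2$, or a direct pumping-style argument against $\MCF$ grammars for $F_2\cross\ZZ$.
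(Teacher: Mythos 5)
You have not proved the statement, and you could not have: it is an open conjecture in the paper, not a theorem. What you have written is a correct reduction of the conjecture to Conjecture~\ref{f2zconj}, which is precisely the equivalence the paper itself attributes to \cite{kropholler_freeprod_2017} and leaves unresolved. Your two directions check out as conditional/unconditional arguments: the ``if'' direction is unconditional and sound ($A(K_{n_1}\sqcup\cdots\sqcup K_{n_k})\isom \ZZ^{n_1}\ast\cdots\ast\ZZ^{n_k}$, Ho's theorem for $\ZZ^n$, closure under free products from \cite{kropholler_freeprod_2017}); and in the ``only if'' direction your combinatorial observation (disjoint union of cliques $\Leftrightarrow$ no induced $P_3$), the induced-subgraph subgroup theorem, the identification $A(P_3)\isom F_2\cross\ZZ$ via the join $P_3 = \{b\}\ast\{a,c\}$, and the appeal to \reflem{Subgroups} are all correct. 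But that direction bottoms out at Conjecture~\ref{f2zconj}, which you candidly flag as ``the main obstacle'' --- so the proposal is a conditional statement, not a proof, and the gap is exactly the one the paper acknowledges.

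It is worth noting how your reduction sits relative to what the paper does prove unconditionally. \refthm{RAAG} establishes only the weaker conclusion that an $\MCF$ word problem forces $\Gamma\in\mathcal{G}$, using $A(P_4)$ and $F_2\cross F_2$ (Theorems~\ref{Thm:nol3} and~\ref{Thm:nof2f2}) as the excluded subgroups rather than $F_2\cross\ZZ$; the difference between $\mathcal{G}$ and disjoint unions of cliques is exactly the cone operation applied to disconnected graphs, whose minimal instance is $P_3$ with $A(P_3)\isom F_2\cross\ZZ$. So your diagnosis is accurate: Conjecture~\ref{f2zconj} is the sole missing ingredient, and your closing remarks correctly explain why the paper's two non-semilinearity mechanisms fail here --- the abelianization of $F_2\cross\ZZ$ is $\ZZ^3$ (which is $\MCF$ by Ho's result), and all finitely generated subgroups of $F_2\cross\ZZ$ are undistorted, so neither the distortion argument of \refsec{Distortion} nor the commutator-counting argument of \refsec{Nilpotent} produces a non-semilinear regular slice. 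A genuinely new invariant or a pumping-style argument against $\MCF$ grammars, as you suggest, would indeed be needed to close the gap.
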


\subsection{Graph theory and RAAGs}

Right-angled Artin groups (RAAG's) have been the subject of much recent interest because of their rich subgroup structure; in particular every special group embeds in a RAAG. See~ \cite{MR3104553, wise1, wise2}. 

\begin{definition}
Let $\Gamma$ be a graph (more precisely, an undirected graph with no loops). The associated {\em right angled Artin group} $A(\Gamma)$ is the group with presentation:
$$\langle v\in V(\Gamma)\mid [v, w]\mbox{ if }[v,w]\in E(\Gamma)\rangle$$
\end{definition}

\begin{definition}
\begin{enumerate}
\item 
$K_1$ is the graph with one vertex and no edges. 
\item $P_4$ is the graph with 4 vertices and 3 edges depicted in Figure~\ref{fig:len3}.
\end{enumerate}
\end{definition}

\begin{figure}[ht]
\begin{tikzpicture}[-,>=stealth',auto,node distance=1.5cm,
                    semithick]
  \node[state]            (A)               {$a$};
  \node[state]            (B) [right of=A]  {$b$};
  \node[state]            (C) [right of=B]  {$c$};
  \node[state]            (D) [right of=C]  {$d$};
  \path (A) edge              (B)
        (B) edge              (C)
        (C) edge              (D);
\end{tikzpicture}
\caption{The graph $P_4$ \label{fig:len3}} 
\end{figure}
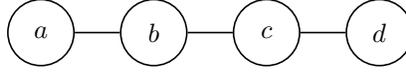

\begin{definition}\label{Def:Join}
A graph $\Gamma$ is a {\em join} if there exist non-empty induced subgraphs $J, K\subset\Gamma$ such that the following hold:
\begin{itemize}
\item $V(\Gamma) = V(J)\sqcup K(L)$,
\item every vertex of $J$ is joined to every vertex of $K$. 
\end{itemize}
We write $\Gamma = J\ast K$ if $\Gamma$ is a join of $J$ and $K$. 
\end{definition}

Clearly $A(\Gamma)=A(J)\times A(K)$ if  $\Gamma = J\ast K$. It follows from Servatius' Centralizer Theorem~\cite{MR1023285} that $A(\Gamma)$ is a non-trivial direct product if and only if $\Gamma$ is a join.
For example $A(P_4)$ is not a direct product.

There is a nice characterisation of joins using complement graphs.

\begin{definition}
Let $\Gamma$ be a graph. Its \emph{complement} $\bar\Gamma$ is defined as follows:
\begin{itemize}
\item $V(\bar\Gamma) = V(\Gamma)$,
\item two vertices $v, w$ are joined by an edge in $\bar\Gamma$ if and only if they are not joined by an edge in $\Gamma$. 
\end{itemize}
\end{definition}

\begin{remark}
\label{Rem:Dual}
Complementation is an involution on the set of graphs (that is, $\bar{\bar\Gamma} = \Gamma$). Notice that $P_4$ is isomorphic to its own complement.
\end{remark}

\begin{lemma}
A graph $\Gamma$ is a join if and only if $\bar\Gamma$ is disconnected. 
\end{lemma}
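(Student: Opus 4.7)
The proof is essentially a direct unpacking of the two definitions, and I would present it as a pair of short implications.

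For the forward direction, suppose $\Gamma = J \ast K$ is a join, with $V(\Gamma) = V(J) \sqcup V(K)$ and every vertex of $J$ adjacent in $\Gamma$ to every vertex of $K$. By the definition of the complement, this means there are no edges in $\bar\Gamma$ with one endpoint in $V(J)$ and the other in $V(K)$. Since $V(J)$ and $V(K)$ are both nonempty, the vertex set of $\bar\Gamma$ splits into two nonempty pieces with no edges between them, so $\bar\Gamma$ is disconnected.

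For the reverse direction, suppose $\bar\Gamma$ is disconnected. Choose a connected component of $\bar\Gamma$ with vertex set $V_1$, and let $V_2 = V(\Gamma) \setminus V_1$. Both $V_1$ and $V_2$ are nonempty, and by choice of $V_1$ as a union of components, no edge of $\bar\Gamma$ crosses between $V_1$ and $V_2$. Again by the definition of the complement, every vertex of $V_1$ is adjacent in $\Gamma$ to every vertex of $V_2$. Let $J$ and $K$ be the induced subgraphs on $V_1$ and $V_2$ respectively; then $\Gamma = J \ast K$ realizes $\Gamma$ as a join.

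There is no real obstacle here beyond keeping the nonemptiness conditions straight: the definition of join in Definition~\ref{Def:Join} requires both $J$ and $K$ to be nonempty, and the notion of \emph{disconnected} must be read in the usual sense of having at least two nonempty components. Once both sides are parsed correctly, each direction is one line of definition-chasing, so I would keep the write-up short and not separate it into named cases.
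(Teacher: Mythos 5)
Your proof is correct and matches the paper's argument: the forward direction is the same observation that a join has no complement-edges between the two sides, and your reverse direction is just a direct unwinding of what the paper compresses into ``use Remark~\ref{Rem:Dual}'' (that complementation is an involution, so a disconnection of $\bar\Gamma$ dualizes to a join decomposition of $\Gamma$). No substantive difference.
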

\begin{proof}
Suppose $\Gamma=J*K$.  Then in $\Gamma^*$ there are no edges from any vertex of $J$ to any vertex of $K$. For the converse, use \refrem{Dual}.
\end{proof}

Complements respect induced subgraphs as follows. 

\begin{lemma}
Let $\Gamma$ be a graph. If $\Lambda\subset\Gamma$ is a full subgraph, then $\bar\Lambda \subset \bar\Gamma$ is a full subgraph. \qed
\end{lemma}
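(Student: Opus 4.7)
The plan is to unwind the definitions directly: a full (induced) subgraph is determined by its vertex set, since an edge is present if and only if the corresponding edge is present in the ambient graph. So the content of the lemma is purely a check that complementation commutes with the operation of passing to an induced subgraph on a fixed vertex set.

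Concretely, I would first record that $V(\bar\Lambda) = V(\Lambda) \subset V(\Gamma) = V(\bar\Gamma)$, so the vertex sets are in the correct inclusion. Then, given any two distinct vertices $v, w \in V(\Lambda)$, I would chain the following equivalences: $\{v,w\}$ is an edge of $\bar\Lambda$ iff $\{v,w\}$ is not an edge of $\Lambda$ (by definition of complement) iff $\{v,w\}$ is not an edge of $\Gamma$ (because $\Lambda$ is a full subgraph of $\Gamma$, so $\Lambda$ contains every edge of $\Gamma$ with both endpoints in $V(\Lambda)$) iff $\{v,w\}$ is an edge of $\bar\Gamma$ (again by definition of complement). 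This shows that an edge of $\bar\Gamma$ whose endpoints lie in $V(\bar\Lambda)$ is always an edge of $\bar\Lambda$, which is precisely the fullness condition.

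There is no real obstacle here; the only subtlety is being careful about which graph each edge relation refers to and keeping straight the two separate acts of complementation. Since the paper marks the statement with \qed and uses it only as a structural observation in the discussion preceding \refthm{RAAG}, a one-line proof of the form above is all that is required.
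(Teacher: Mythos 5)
Your proof is correct and is exactly the definition-unwinding argument the paper has in mind; the paper simply marks the lemma with \qed and omits the details as immediate. Nothing further is needed.
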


\begin{definition} The class, CoG, of \emph{complement reducible} graphs is the smallest clase which contains $K_1$ and is closed under complement and disjoint union. For short we speak of {cographs} instead of complement reducible graphs. 
\end{definition}

\begin{theorem}[\cite{MR1686154}]\label{Thm:CoG}
\begin{enumerate}
\item A connected cograph is either a join or the graph with a single vertex.
\item A graph is a cograph if and only if it has no full $P_4$ subgraphs.
\end{enumerate}
\end{theorem}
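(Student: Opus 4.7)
The plan is to prove part (2) first, and then deduce part (1) as a short corollary using the preceding lemma in the text (a graph is a join iff its complement is disconnected).

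For the forward direction of (2), I induct on the cograph construction of $\Gamma$. The graph $K_1$ has no induced $P_4$. By \refrem{Dual}, $P_4$ is self-complementary, so complementation preserves $P_4$-freeness. Since $P_4$ is connected, any induced $P_4$ in $\Gamma_1 \sqcup \Gamma_2$ lies in a single summand, so $P_4$-freeness is preserved by disjoint union.

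For the backward direction of (2), I induct on $|V(\Gamma)|$; the case $|V| = 1$ gives $\Gamma = K_1$, a cograph. For $|V| \geq 2$, the crux is the following key lemma: \emph{if $\Gamma$ is $P_4$-free with $|V| \geq 2$, then $\Gamma$ or $\bar\Gamma$ is disconnected}. Granting this: if $\Gamma$ is disconnected, its components are $P_4$-free and strictly smaller, hence cographs by induction, so $\Gamma$ is a cograph. If instead $\bar\Gamma$ is disconnected, the same reasoning applied to $\bar\Gamma$ (which is $P_4$-free since $P_4 \isom \bar{P_4}$) shows $\bar\Gamma$ is a cograph, whence so is $\Gamma = \bar{\bar\Gamma}$.

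I would prove the key lemma by a separate induction on $|V|$. The base $|V| = 2$ is immediate, since $\Gamma$ is $K_2$ or $\bar{K_2}$. For the inductive step with $|V| \geq 3$ and $\Gamma$ $P_4$-free, pick any vertex $v$ and apply the inductive hypothesis to $\Gamma - v$; since the conclusion is symmetric under complementation (using $P_4 \isom \bar{P_4}$), I may assume $\Gamma - v$ is disconnected, with components $H_1, H_2, \ldots$. If $v$ has no neighbor in some $H_i$, then $H_i$ is a component of $\Gamma$ itself, so $\Gamma$ is disconnected. Otherwise $v$ has a neighbor in every component, and I claim $v$ must then be adjacent to every vertex of $\Gamma - v$, making $v$ isolated in $\bar\Gamma$ and hence disconnecting $\bar\Gamma$. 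Suppose the claim fails: $v$ has a non-neighbor $x$ in some $H_i$. Follow a path in $H_i$ from a neighbor of $v$ to $x$; at the first non-neighbor of $v$ along this path we obtain an edge $y y'$ in $H_i$ with $v y \in E$ and $v y' \notin E$. Picking any neighbor $z$ of $v$ in a different component $H_{i'}$, the induced subgraph on $\{z, v, y, y'\}$ consists precisely of the path $z - v - y - y'$ (the edges $z y$, $z y'$, and $v y'$ are absent), an induced $P_4$, contradicting $P_4$-freeness.

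Part (1) then follows immediately: a connected cograph $\Gamma$ with $|V| \geq 2$ is $P_4$-free by (2), so by the key lemma either $\Gamma$ or $\bar\Gamma$ is disconnected; connectedness of $\Gamma$ forces $\bar\Gamma$ to be disconnected, and the preceding lemma in the text then implies $\Gamma$ is a join. The main obstacle is the inductive step of the key lemma, specifically the subcase in which $v$ has neighbors in several components of $\Gamma - v$; producing the required induced $P_4$ hinges on combining a short walk inside one component with a witness vertex drawn from another.
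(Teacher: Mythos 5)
Your proposal is correct, and it is worth pointing out that the paper itself offers no proof of this statement at all: Theorem~\ref{Thm:CoG} is simply quoted from the literature (\cite{MR1686154}), so any comparison is between your self-contained argument and a citation. What you have written is essentially Seinsche's classical proof that the $P_4$-free graphs are exactly the cographs, and it checks out in detail. The forward direction of (2) is the routine structural induction, using that $P_4$ is connected (so disjoint unions cannot create one) and self-complementary (so complementation preserves $P_4$-freeness, via the paper's observation that complements respect induced subgraphs). The real content is your key lemma, that a $P_4$-free graph on at least two vertices is disconnected or has disconnected complement, and its inductive step is sound: the reduction ``WLOG $\Gamma - v$ is disconnected'' is legitimate because $\overline{\Gamma - v} = \bar\Gamma - v$, $\bar\Gamma$ is again $P_4$-free, and the conclusion is symmetric under complementation. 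In the delicate subcase where $v$ has a neighbour in every component of $\Gamma - v$ but misses some vertex $x$, taking the first non-neighbour $y'$ along a path from a neighbour of $v$ to $x$ inside that component gives an edge $yy'$ with $vy \in E(\Gamma)$ and $vy' \notin E(\Gamma)$, and a neighbour $z$ of $v$ in a different component satisfies $zy, zy' \notin E(\Gamma)$, so $\{z,v,y,y'\}$ does induce a $P_4$, as you claim. The deduction of the backward direction of (2) by induction on vertex count (passing to components of $\Gamma$ or of $\bar\Gamma$, both strictly smaller and $P_4$-free) is valid, and part (1) correctly combines (2), the key lemma, and the paper's earlier lemma that $\Gamma$ is a join if and only if $\bar\Gamma$ is disconnected. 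What your route buys is a complete, elementary, self-contained verification of a black-boxed input to the proof of Theorem~\ref{Thm:RAAG}; what the citation buys the paper is brevity.
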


\subsection{Proof of~\refthm{RAAG}}

\begin{theorem}\label{Thm:nol3}
The word problem for $A(P_4)$ is not MCF. 
\end{theorem}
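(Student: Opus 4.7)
The plan is to follow the template of Theorems~\ref{Thm:Nilpotent} and~\ref{Thm:Hyperbolic}: assuming for contradiction that $W = \WP(A(P_4))$ is MCF and hence semilinear, I would exhibit a regular language $R \subset \Sigma^*$ such that the Parikh image $\psi(W \cap R)$, projected onto a suitable pair of coordinates, is not semilinear, contradicting \reflem{Useful}.

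Label the vertices of $P_4$ by $a - b - c - d$, so the commuting pairs are $\{a,b\}, \{b,c\}, \{c,d\}$ and in particular $b$ centralises the free subgroup $\langle a,c\rangle \isom F_2$. A natural family of identity-defining words built from this is
\[
w_{m,n} \;=\; (a^{n}\, c\, a^{-n})^{m}\; b\; (a^{n}\, c^{-1}\, a^{-n})^{m}\; b^{-1},
\]
which is trivial in $A(P_4)$ because $(a^n c a^{-n})^m = a^n c^m a^{-n}$ and $b$ centralises this element. As a word in $\Sigma^*$, $w_{m,n}$ has $|w_{m,n}|_a = 2nm$ and $|w_{m,n}|_c = m$, so the family projects onto $\{(m, 2nm) : m,n \ge 1\}$. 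This set is not semilinear, by the argument proving \reflem{iij}: every linear subset of $\NN^2$ has a bounded set of achievable ratios of coordinates, whereas $|w_{m,n}|_a / |w_{m,n}|_c = 2n$ is unbounded.

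The main task, and principal obstacle, is to realise this family inside $W \cap R$ for a regular $R$; the natural candidate is
\[
R \;=\; \bigl(a^{+}\, c\, (a^{-1})^{+}\bigr)^{+}\; b\; \bigl(a^{+}\, c^{-1}\, (a^{-1})^{+}\bigr)^{+}\; b^{-1}.
\]
A priori this is too permissive: the $a$-exponents in different blocks need not be equal, and after the commuting $b$s are moved aside, the constraint $u =_{A(P_4)} 1$ descends to a constraint in the free subgroup $F_2 = \langle a, c\rangle$, which produces only \emph{linear} relations among the block exponents, giving a semilinear Parikh image. The crux of the proof is therefore to refine the construction so that the $a$-exponent is forced to be uniform across the $m$ blocks. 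I expect this refinement to exploit the fourth vertex $d$ in an essential way: since $d$ commutes with $c$ but not with $a$, carefully interleaved powers of $d^{\pm 1}$ should force $a$-exponent equality via free-group cancellation in $\langle a, d\rangle$. This is the aspect of the argument that genuinely uses the full graph $P_4$ rather than merely its subpath $P_3$; for the latter, any analogous argument would reduce to the open Conjecture~\ref{f2zconj}. Once the refinement is in place, \reflem{Useful} applied to the projection $(|\cdot|_c, |\cdot|_a)$ produces the contradiction.
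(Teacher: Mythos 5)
Your proposal correctly identifies the shape of the argument (a regular slice whose Parikh image violates \reflem{Useful}), and to your credit you correctly diagnose why your own candidate $R$ fails: with $b$ centralising $\langle a,c\rangle$, the triviality constraint descends to the free group $\langle a,c\rangle$, whose word problem is context-free and hence has semilinear regular slices. But the step you defer --- ``carefully interleaved powers of $d^{\pm1}$ should force $a$-exponent equality via free-group cancellation'' --- is the entire content of the theorem, and it is not carried out. There is no construction of the refined $R$, no verification of what $W\cap R$ actually contains, and no argument that its Parikh image is non-semilinear. A second gap hides behind the first: even if you forced uniformity of the $a$-exponents in your intended family $w_{m,n}$, you would still need to control \emph{every} word of $W\cap R$, not just that family, since a superset of a non-semilinear set can perfectly well be semilinear (for instance all of $\NN^2$). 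That requires a lower bound on some letter count valid for \emph{all} representatives of the relevant group elements --- a distortion statement --- and nothing in your sketch supplies one. Note that $\langle a,c\rangle$ itself is a retract of $A(P_4)$ and therefore undistorted, so no refinement that only sees this subgroup can succeed.

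The paper closes exactly this gap by a different device: the Bestvina--Brady group $BB(P_4)$, the kernel of the homomorphism $A(P_4)\to\ZZ$ sending each generator to $1$, which by Dicks--Leary is free of rank $3$ on $x=ab^{-1}$, $y=bc^{-1}$, $z=cd^{-1}$ and is quadratically distorted in $A(P_4)$. Concretely, any word in $x,y,z$ representing $(ad)^n(a^{-1}d^{-1})^n$ must contain at least $2n^2$ positive occurrences of $y$; this is proved by an explicit computation exhibiting the reduced form of this element in the free group $BB(P_4)$, where minimal representatives are canonical. Taking $R=(ad)^*(a^{-1}d^{-1})^*\{x,y,z\}^*$ and the Parikh map $(|\cdot|_a,|\cdot|_y)$ then yields an image lying on and above a parabola, which meets every non-vertical line in a finite set. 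If you want to salvage your approach, the lesson is that you need a subgroup in which word length is computable (a free subgroup) together with a distortion lower bound applying to all representatives; the Bestvina--Brady kernel supplies both, and that is the idea your sketch is missing.
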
 
\begin{proof}
Recall $A(P_4) = \langle a, b, c, d\mid [a, b], [b, c], [c, d]\rangle$. Let $W$ denote the word problem in $A(P_4)$. We will consider the Bestvina-Brady group $BB(P_4)$, which is the kernel of the following homomorphism:
\begin{align*}
A(P_4) &\to \ZZ\\
a&\mapsto 1\\
b&\mapsto 1\\
c&\mapsto 1\\
d&\mapsto 1.
\end{align*}
By \cite{dicks_presentations_1999}, $BB(P_4)$ is a free group of rank three generated by $\{x = ab^{-1}, y = bc^{-1}, z = cd^{-1}\}$. We will study the language $L = W\cap R$, where $R$ denotes the regular language $(ad)^*(a^{-1}d^{-1})^* \{x, y, z\}^*$. By counting exponents we see that $$L \subset \{(ad)^n(a^{-1}d^{-1})^n \{x, y, z\}^*\}.$$

Let $$u_n = xy^{2n-1}z^{-1}$$ 
and 
$$v_n = x^{-1}y^{2n-1}z.$$ 
Note that in the group $A(P_4)$ we have equalities 
$$u_n = b^{2n-2}(ad)c^{-2n}$$ 
and 
$$v_n = b^{2n}(a^{-1}d^{-1})c^{2-2n}.$$
We can thus see that 
$$(ad)^nc^{-2n} = u_1y^{-2}u_2y^{-4}\dots y^{2-2n}u_n$$ 
and 
$$b^{2n}(a^{-1}b^{-1})^n = v_ny^{2-2n}v_{n-1}\dots y^{-2}v_1.$$ 
Combining these, we have 
$$(ad)^n(a^{-1}d^{-1})^n = u_1y^{-2}u_2y^{-4}\dots u_ny^{-2n}v_n\dots y^{-2}v_1.$$
Since $BB(P_4)$ is a free group this is a minimal representation of this element. Thus the positive exponent sum of $y$ in any word representing $(ad)^n(a^{-1}d^{-1})^n$ is greater than or equal to $2n^2$. We can now consider the image of the Parikh map:
\begin{align*}
L&\to \NN^2\\
w&\mapsto (|w|_a, |w|_y).
\end{align*}
The image of this lies on and above the curve $y = 2n^2$, thus any non-vertical line intersects this set in a finite subset. Hence $L$ is not semilinear and neither is $W$. We conclude, by Lemma \ref{Lem:Useful}, that the word problem in $A(P_4)$ is not MCF.
\end{proof}

\begin{theorem}\label{Thm:nof2f2}
The word problem for $F_2\times F_2$ is not MCF. 
\end{theorem}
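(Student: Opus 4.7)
Following the strategy of \refthm{nol3}, I look for a regular language $R$ such that the Parikh image of a two-coordinate projection of $W \cap R$ lies above a superlinear curve, and then apply \reflem{Useful}. Here $W = \WP(F_2 \times F_2)$.

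Write $F_2 \times F_2 = \langle a, b\rangle \times \langle c, d\rangle$, so each factor is free of rank two and the commutation relations are exactly $[a, c] = [a, d] = [b, c] = [b, d] = 1$; the commutation graph is $K_{2,2}$ and $F_2 \times F_2 = A(K_{2,2})$. By \reflem{Subgroups} I may enlarge the alphabet to include auxiliary letters $X = ac^{-1}$, $Y = ad^{-1}$, $Z = bc^{-1}$, $\Omega = bd^{-1}$, which together generate the Bestvina--Brady subgroup $BB(K_{2,2})$, the kernel of the map $F_2 \times F_2 \to \ZZ$ sending each of $a, b, c, d$ to $1$.

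Next I choose a regular language
\[
R = \alpha^{*}\beta^{*}(\alpha^{-1})^{*}(\beta^{-1})^{*}\Sigma^{*}
\]
where $\alpha$ and $\beta$ are short words over the original alphabet and $\Sigma \subseteq \{X, Y, Z, \Omega\}^{\pm 1}$, chosen so that the triviality constraint forces any word $\alpha^n \beta^m \alpha^{-n} \beta^{-m} w \in W \cap R$ to have a tail $w$ whose length in the $\Sigma$-alphabet is at least quadratic in $\max(n, m)$. The Parikh projection onto (count of a letter of $\alpha$, count of a chosen letter of $\Sigma$) then lies above a quadratic curve, and hence is not semilinear.

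The main obstacle, and the essential difference from \refthm{nol3}, is that $BB(K_{2,2})$ is not free: by Bestvina and Brady it is finitely generated but not finitely presented, so ``minimum length'' in $BB(K_{2,2})$ cannot be read off from uniqueness of reduced forms in a free group. The workaround I envisage is to confine the superlinear counting argument to one of the two free $F_2$-factors of $F_2 \times F_2$, where reduced words \emph{are} unique, and to use triviality in the other factor only as a rigidity condition to pin down the exponents of $\alpha$ and $\beta$.
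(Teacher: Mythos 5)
Your plan correctly identifies the overall shape of the argument (a regular slice whose Parikh image lies on or above a quadratic curve, then \reflem{Useful}), but the engine of the proof is missing, and you flag this yourself: everything hinges on a quadratic lower bound for the number of occurrences of some designated generator in any word over the auxiliary alphabet representing the commutator-type element forced by the prefix, and for your chosen subgroup $BB(K_{2,2})$ --- the Stallings--Bieri kernel of $F_2\times F_2\to\ZZ$, generated by $ac^{-1}$, $ad^{-1}$, $bc^{-1}$, $bd^{-1}$ --- you supply no mechanism for producing it. The workaround you sketch (confine the counting to one free factor, use the other factor only to pin down the exponents of $\alpha$ and $\beta$) cannot work as stated: each of $X, Y, Z, \Omega$ projects to a single letter $a^{\pm 1}$ or $b^{\pm 1}$ in the first factor, so the projection of a $\Sigma$-word is length-preserving, and uniqueness of reduced forms in $\langle a,b\rangle$ then yields only a \emph{linear} lower bound (a word representing $(b^m a^n b^{-m} a^{-n},1)$ must have length at least $2(n+m)$). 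The quadratic behaviour comes precisely from the interaction between the two factors, that is, from subgroup distortion, and that is the one ingredient your plan does not provide.

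The paper avoids $BB(K_{2,2})$ entirely and uses a different subgroup: the fibre product $P=\{(u,v)\in F_2\times F_2 \mid f(u)=f(v)\}$ of the abelianisation $f\from F_2\to\ZZ^2$, generated by $r=(a,a)$, $s=(b,b)$ and $t=(aba^{-1}b^{-1},1)$. The quadratic lower bound is imported from Olshanskii's theorem that $P$ is quadratically distorted in $F_2\times F_2$; concretely, any word in $r,s,t$ representing $(a^nb^ma^{-n}b^{-m},1)$ has at least $nm$ occurrences of $t$. Taking $R=a^*b^*(a^{-1})^*(b^{-1})^*\{r,s,t,r^{-1},s^{-1}\}^*$, the Parikh projection $w\mapsto(|w|_a,|w|_t)$ of $W\cap R$ is then not semilinear by \reflem{iij}. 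To complete your version you would need an analogous distortion estimate for $BB(K_{2,2})$, which, being finitely generated but not finitely presented, is genuinely harder to control than the free group $BB(P_4)$ used in \refthm{nol3}; either prove or cite such a bound, or switch to the fibre product.
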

\begin{proof}
Let $F_2$ be free on $\{a,b\}$, and let $f\colon F_2\to Z^2$ be the abelianisation
map $F_2\to \ZZ^2$. The fibre product of $f$ is $P = \{(u, v)\in F_2\times F_2 \mid f(u) = f(v)\}$. It is easy to show that $P$ is generated by $r = (a, a), s = (b,b), t =
(aba^{-1}b^{-1}, 1)$. By~\cite[Theorem 2]{olshanskii_length_2001}, $P$ is quadratically distorted in
$F_2\times F_2$. In particular, any word in $r, s$ and $t$
representing the element $(a^nb^ma^{-n}b^{-m}, 1)$ has at least $nm$ occurrences of $t$.

Consider the intersection of the word problem $W$ with the regular language $R$ 
\[ L = W\cap R = W\cap a^*b^*(a^{-1})^*(b^{-1})^*\{r, s, t, r^{-1}, s^{-1}\}^*.
\] 

Look at the image of $L$ under the Parikh map:
\begin{align*}
L &\to \NN^2\\
w &\mapsto (|w|_a, |w|_t).
\end{align*}
The image of this map is $\{(n, nm)\}$ and by Lemma~\ref{Lem:iij} this is not a semilinear set. Hence, $L$ is not semilinear and therefore, not MCF. It follows, by Lemma \ref{Lem:Useful}, that the word problem in $F_2\times F_2$ is not MCF.
\end{proof}

\begin{proof}[Proof of \refthm{RAAG}]
By~\cite{kropholler_freeprod_2017}, the class of groups with MCF word problem is closed under free products. We can therefore reduce to connected graphs $\Gamma$. The class of groups with MCF word problem is closed under taking finitely generated subgroups. We will now consider connected graphs $\Gamma$ and the associated RAAG $A(\Gamma)$. By Theorems \ref{Thm:nol3} and \ref{Thm:nof2f2}, the graph $\Gamma$ cannot contain any full subgraphs isomorphic to $P_4$ or a square. 

By Theorem~\ref{Thm:CoG} a connected graph which does not contain an induced subgraph $P_4$ is the join of two induced subgraphs $J$ and $K$. As $J$ and $K$ are induced subgraphs, they also contain no copies of $P_4$. Thus if connected they split as a join and so on. 

Repeating this splitting process we see $\Gamma = A_0\ast A_1\ast\dots\ast A_n$. If $\mbox{Diam}(A_i)>1$ for more than one $i$, then the graph contains a square. By maximality of the splitting, we can assume that $A_i = \{v\}$ for all $i\neq 0$. If $A_0$ is connected, then, by maximality of the spltting, it is a point and $A(\Gamma) = \ZZ^n$. In the case that $A_0$ is disconnected, we can use the above analysis to decompose the connected components of $A_0$. Repeating this process we see that $\Gamma\in\mathcal{G}$. 
\end{proof}

\bibliography{GKS}{}
\bibliographystyle{hyperplain} 

\end{document}